\date{\today}
\newtheorem{theorem}{Theorem}[section]
\theoremstyle{definition}
\theoremstyle{remark}
\numberwithin{equation}{section}
\newcommand{\hyper}[5]{\,{}_{#1}F_{#2}\left(\!\!%
\begin{array}{cc}{\displaystyle{#3}}\\[-0.1ex]
{\displaystyle{#4}} \end{array}\Big| \,{\displaystyle{#5}}
\right)}
\begin{document}

\title[Fourier transform of the orthogonal polynomials on the unit ball]{Fourier transform of the orthogonal polynomials on the unit ball}

\author[G\"{u}ldo\u{g}an Lekes\.{I}z]{Esra G\"{u}ldo\u{g}an Lekes\.{I}z}
\email[G\"{u}ldo\u{g}an Lekesiz]{esragldgn@gmail.com}

\author[Akta\c{s}]{Rab\.{I}a  Akta\c{s}}
\address[Akta\c{s}]{Ankara University, Faculty of Science, Department of Mathematics, 06100, Tando\u{g}an, Ankara, Turkey}
\email[Akta\c{s}]{raktas@science.ankara.edu.tr}

\author[Area]{Iv\'an Area}
\address[Area]{Universidade de Vigo,
		Departamento de Matem\'atica Aplicada II,
              E. E. Aero\-n\'au\-ti\-ca e do Espazo,
              Campus As Lagoas s/n,
              32004 Ourense, Spain}
\email[I. Area]{area@uvigo.gal}

\subjclass[2020]{Primary 33C50; 33C70; 33C45  \ Secondary 42B10}

\keywords{Gegenbauer polynomials; Multivariate orthogonal polynomials; Hahn polynomials; Fourier transform; Parseval's identity; Hypergeometric function}
\begin{abstract}
Fourier transform of multivariate orthogonal polynomials on the unit ball are obtained. By using Parseval's identity,  a new family of multivariate orthogonal functions are introduced. The results are expressed in terms of the continuous Hahn polynomials.
\end{abstract}

\maketitle

\section{Introduction}

{}From a historical point of view, mathematical transforms started with some works of L. Euler within the context of second-order differential equation problems \cite{Deakin}. Since then, due to their interesting mathematical properties as well as their applications, integral transforms have attracted research interests  in many areas of engineering, mathematics, physics, as well as several other scientific branches. Just to give an idea, without completeness, integral transforms such as Fourier, Laplace, Beta, Hankel, Mellin and Whittaker transforms with various special functions as kernels play an important role in various problems of physics, engineering, mathematics \cite{1,2,3,4,5,6,7,8,9,10,11,12}, and in vibration analysis, sound engineering, communication, data analysis, automatization, etc. \cite{13,14,15,16,17,18}.

As for the relation between orthogonal polynomials and integral transforms, by the Fourier transform or other integral transforms, it is shown that some systems of univariate orthogonal polynomials  are mapped into other family \cite{4}. For example, Hermite functions which are  Hermite polynomials $H_{n}\left(  x\right)  $\ multiplied by $\exp \left(  -x^{2}/2\right)  $\ are eigenfunctions of Fourier transform \cite{7,8,9,19}. Some other interesting works are related with families of classical discrete orthogonal polynomials \cite{Natig}. In \cite{9}, by the Fourier-Jacobi transform, it is investigated that classical Jacobi polynomials can be mapped onto Wilson polynomials. Also, Fourier transform of Jacobi polynomials and their close relation with continuous Hahn polynomials have been discussed by Koelink \cite{7}.

Recently, in the univariate case, the Fourier transforms of finite classical orthogonal polynomials by Koepf and Masjed-Jamei \cite{8}, generalized ultraspherical and generalized Hermite polynomials and symmetric sequences of finite orthogonal polynomials  \cite{11,20,21} have been studied. As for the multivariate case, Tratnik \cite{Trat1, Trat2} presented multivariable generalization both of all continuous and discrete families of the Askey tableau, providing hypergeometric representation, orthogonality weight function which applies with respect to subspaces of lower degree and biorthogonality within a given subspace. A non-trivial interaction for multivariable continuous Hahn polynomials has been presented by Koelink et al. \cite{Koe}. Moreover, in \cite{22,23,24} Fourier transforms of multivariate orthogonal polynomials and their applications have been investigated, obtaining some families of orthogonal functions in terms of continuous Hahn polynomials. In particular, in \cite{22} a new family of orthogonal functions has been derived by using Fourier transforms of bivariate orthogonal polynomials on the unit disc and Parseval's identity.


The main aims of this investigation are to find the Fourier transformation of the classical orthogonal polynomials on the unit ball $\mathbb{B}^{r}$ and to obtain a new family of multivariate orthogonal functions in terms of multivariable Hahn polynomials.

The work is organized as follows. In section \ref{sec:bdn} basic definitions and notations are introduced. Main results are stated and proved in section \ref{sec:ft}.

\section{Preliminaries}\label{sec:bdn}
In this section, we state background materials on orthogonal polynomials that we shall need. The first subsection recalls the properties of two families of (univariate) orthogonal polynomials, namely the Gegenbauer polynomials and the continuous Hahn polynomials, as well as some definitions. In the second subsection, we recall the basic results on the (multivariate) classical orthogonal polynomials on the unit ball.

\subsection{The classical univariate Gegenbauer Polynomials}
Let
\begin{equation}\label{1}
P_{n}^{\left(  \alpha,\beta \right)  }\left(  x\right)  =2^{-n}\sum\limits_{k=0}^{n}\binom{n+\alpha}{k}\binom{n+\beta}{n-k}\left(  x+1\right)^{k}\left(  x-1\right)  ^{n-k}
\end{equation}
be the univariate Jacobi polynomial of degree $n$, orthogonal with respect to the weight function \cite[p. 68, Eq. (4.3.2)]{244}
\[
w\left(  x\right)  =\left(  1-x\right)  ^{\alpha}\left(  1+x\right)  ^{\beta},~~\alpha,\beta>-1,~~x\in \left[  -1,1\right]  .
\]
The univariate Gegenbauer polynomials are a special case of Jacobi polynomial, defined by \cite[p. 277, Eq. (4)]{25}
\begin{equation}\label{3}
C_{n}^{\left(  \lambda \right)  }\left(  x\right)  =\frac{\left(2\lambda \right)  _{n}}{\left(  \lambda+\frac{1}{2}\right)  _{n}}P_{n}^{\left(\lambda-\frac{1}{2},\lambda-\frac{1}{2}\right)  }\left(  x\right),
\end{equation}
where for $n \geq 1$, $\left(  \alpha \right)  _{n}=\alpha \left(  \alpha+1\right) \cdots \left(
\alpha+n-1\right)$, denotes the
Pochhammer symbol with the convention $(\alpha)_{0}=1$.  These polynomials can also be written in terms of hypergeometric series as
\begin{equation}\label{hyper}
C_{n}^{\left(  \lambda \right)  }\left(  x\right)  =\frac{\left(
2\lambda \right)  _{n}}{n!}\ _{2}F_{1}\left(
\genfrac{}{}{0pt}{0}{-n,n+2\lambda}{\lambda+\frac{1}{2}}
\mid \frac{1-x}{2}\right) ,
\end{equation}
where \cite[p. 73, Eq. (2)]{25}
\begin{equation}\label{genhyper}
_{p}F_{q}\left(
\genfrac{}{}{0pt}{0}{a_{1},\ a_{2},\dots,a_{p}}{b_{1},\ b_{2},\dots,b_{q}}\mid x\right)  =
{\displaystyle \sum \limits_{n=0}^{\infty}}
\frac{\left(  a_{1}\right)  _{n}\left(  a_{2}\right)  _{n}\dots \left(
a_{p}\right)  _{n}}{\left(  b_{1}\right)  _{n}\left(  b_{2}\right)
_{n}\dots \left(  b_{q}\right)  _{n}}\frac{x^{n}}{n!}.
\end{equation}
The Gegenbauer polynomials satisfy the orthogonality relation \cite[p. 281, Eq. (28)]{25}
\begin{equation}\label{ort}
 \int \limits_{-1}^{1}\left(  1-x^{2}\right)  ^{\lambda-\frac{1}{2}}
C_{n}^{\left(  \lambda \right)  }\left(  x\right)  C_{m}^{\left(
\lambda \right)  }\left(  x\right)  dx=h_{n}^{\lambda}~\delta_{n,m}
, \qquad \left(  m,n\in \mathbb{N}_{0}:=\mathbb{N}
\cup \left \{  0\right \}  \right)
\end{equation}
where $h_{n}^{\lambda}$ is given by
\begin{equation}\label{gnorm}
h_{n}^{\lambda}=\frac{\left(  2\lambda \right)  _{n}\Gamma \left(  \lambda
+\frac{1}{2}\right)  \Gamma \left(  \frac{1}{2}\right)  }{n!\left(
n+\lambda \right)  \Gamma \left(  \lambda \right)  },
\end{equation}
$\delta_{n,m}$ is the Kronecker delta, and the Gamma function $\Gamma \left(  x\right)$ is defined by  \cite[p. 254, (6.1.1)]{26}
\begin{equation}\label{2}
\Gamma \left(  x\right)  =\int \limits_{0}^{\infty}t^{x-1}e^{-t}dt,\qquad  \Re \left(x\right)  >0.
\end{equation}
 The beta function is given by \cite[p. 258, (6.2.1)]{26}
\begin{equation*}
B\left(  a,b\right)  ={\displaystyle \int \limits_{0}^{1}}x^{a-1}\left(
 1-x\right)  ^{b-1}dx=\frac{\Gamma \left(  a\right)\Gamma \left(  b\right)  }{\Gamma \left(  a+b\right)  }, \qquad \Re\left(  a\right)  ,\Re\left(  b\right)  >0.
\end{equation*}

For our purposes, we shall also need to introduce the continuous Hahn polynomials \cite{27}
\begin{multline}\label{hahn}
p_{n}\left( x;a,b,c,d\right) \\
=i^{n}\frac{\left( a+c\right) _{n}\left(a+d\right) _{n}}{n!} \hyper{3}{2}{-n,\ n+a+b+c+d-1,\ a+ix}{a+c,\ a+d}{1}.
\end{multline}
which can also be written as a limiting case of the Wilson polynomials \cite{27}.

\subsection{Orthogonal polynomials on the unit ball}

Let $\left \Vert \boldsymbol{x}\right \Vert :=\left(  x_{1}^{2}+\cdots +x_{r}
^{2}\right)  ^{1/2}$ for $\boldsymbol{x=}\left(  x_{1},\dots,x_{r}\right)  \in
\mathbb{R}
^{r}$. The unit ball in $\mathbb{R}^{r}$ is denoted by $\mathbb{B}
^{r}:=\left \{  \boldsymbol{x}\in
\mathbb{R}^{r}:\left \Vert \boldsymbol{x}\right \Vert \leq1\right \} $. Let $W_{\mu}$ be
the weight function defined by%
\[
W_{\mu}\left(  \boldsymbol{x}\right)  =\left(  1-\left \Vert \boldsymbol{x}%
\right \Vert ^{2}\right)  ^{\mu-1/2},~~\mu>-1/2.
\]
We shall consider orthogonal polynomials on the unit ball, by considering the inner product%
\[
\left \langle f,g\right \rangle _{\mu}=
{\displaystyle \int \limits_{\mathbb{B}^{r}}}
W_{\mu}\left(  \boldsymbol{x}\right)  f\left(  \boldsymbol{x}\right)  g\left(
\boldsymbol{x}\right)  d\mathbf{x}
\]
where $d\mathbf{x}=dx_{1}\cdots dx_{r}$.

Let $\Pi^{r}$ denote the space of polynomials in $r$ real variables. Let $\Pi_{n}^{r}$ denote the linear space of polynomials in several variables of (total) degree at most $n$ for $n=0,1,2,\dots$. Let $\mathcal{V}_{n}^{r}\left(W_{\mu}\right)  $ be the space of orthogonal polynomials of total degree $n$ with respect to $W_{\mu}\left(  \boldsymbol{x}\right)$. Then $\dim \mathcal{V}_{n}^{r}\left(  W_{\mu}\right)  =\dbinom{n+r-1}{n}.$ The elements of the space $\mathcal{V}_{n}^{r}\left(  W_{\mu}\right)  $ are eigenfunctions
of a second order partial differential equation \cite[p.141, Eq. (5.2.3)]{29}
\begin{equation*}
 \sum \limits_{i=1}^{r}\frac{\partial^{2}P}{\partial x_{i}^{2}}-\sum \limits_{j=1}^{r}\frac{\partial}{\partial x_{j}}x_{j}\left[  2\mu -1+\sum \limits_{i=1}^{r}x_{i}\frac{\partial}{\partial x_{i}}\right]  P  =-\left(  n+r\right)  \left(  n+2\mu-1\right)  P.
\end{equation*}
The space $\mathcal{V}_{n}^{r}$ has several different bases. One orthogonal
basis of the space $\mathcal{V}_{n}^{r}$ can be expressed in terms of the Gegenbauer
polynomials \eqref{hyper} as \cite[p. 143]{29}
\begin{equation}
P_{\mathbf{n}}^{\mu}\left(  \boldsymbol{x}\right)  =\prod \limits_{j=1}^{r}\left(  1-\left \Vert \boldsymbol{x}_{j-1}\right \Vert ^{2}\right) ^{\frac{n_{j}}{2}}C_{n_{j}}^{\left(  \lambda_{j}\right)  }\left(  \frac{x_{j}}{\sqrt{1-\left \Vert \boldsymbol{x}_{j-1}\right \Vert ^{2}}}\right) \label{P},
\end{equation}
where $\lambda_{j}=\mu+\left \vert \mathbf{n}^{j+1}\right \vert +\frac{r-j}{2},$
and
\begin{equation}\label{notation}
\begin{cases}
\boldsymbol{x}_{0}   =0,\  \  \boldsymbol{x}_{j}=\left(  x_{1},\dots,x_{j} \right)  , \\
\mathbf{n}   =\left(  n_{1},\dots,n_{r}\right)  ,\  \text{\ }\left \vert
\mathbf{n}\right \vert =n_{1}+\dots+n_{r}=n,\\
\mathbf{n}^{j}   =\left(  n_{j},\dots,n_{r}\right)  ,\text{ \ }\left \vert \mathbf{n}^{j}\right \vert =n_{j}+\cdots+n_{r},\  \text{\  \ }1\leq j\leq
r,
\end{cases}
\end{equation}
and $\mathbf{n}^{r+1}:=0.$ More precisely,%
\[
\int \limits_{\mathbb{B}^{r}}W_{\mu}\left(  \boldsymbol{x}\right)P_{\mathbf{n}}^{\mu}\left(  \boldsymbol{x}\right)  P_{\mathbf{m}}^{\mu}\left(\boldsymbol{x}\right)  d\mathbf{x}=h_{\mathbf{n}}^{\mu}\delta_{\mathbf{n},\mathbf{m}},
\]
where $\delta_{\mathbf{n},\mathbf{m}}=\delta_{n_{1},m_{1}}\dots \delta
_{n_{r},m_{r}}$ and $h_{\mathbf{n}}^{\mu}$ is given by \cite{29}
\begin{equation}\label{Norm}
h_{\mathbf{n}}^{\mu}=\frac{\pi^{r/2}\Gamma \left(  \mu+\frac{1}{2}\right)
\left(  \mu+\frac{r}{2}\right)  _{\left \vert \mathbf{n}\right \vert }}{\Gamma \left(  \mu+\frac{r+1}{2}+\left \vert \mathbf{n}\right \vert \right)}\prod \limits_{j=1}^{r}\frac{\left(  \mu+\frac{r-j}{2}\right)  _{\left \vert
\mathbf{n}^{j}\right \vert }\left(  2\mu+2\left \vert \mathbf{n}^{j+1}\right \vert +r-j\right)  _{n_{j}}}{n_{j}!\left(  \mu+\frac{r-j+1}{2}\right)_{\left \vert \mathbf{n}^{j}\right \vert }}.
\end{equation}

\section{Main Results}\label{sec:ft}
Let us introduce
\begin{multline}\label{15}
f_{r}\left(  \mathbf{x};\mathbf{n},a,\mu \right)   :=f_{r}\left(
x_{1},\dots,x_{r};n_{1},\dots,n_{r},a,\mu \right)
\\ =\prod \limits_{j=1}^{r}\left(  1-\tanh^{2}x_{j}\right)  ^{a+\frac{r-j}{4}
}\ P_{\mathbf{n}}^{\mu}\left(  \upsilon_{1},\dots,\upsilon_{r}\right),
\end{multline}
for $r\geq1$, where $a,\mu$ are real parameters and
\begin{align*}
\upsilon_{1}\left(  x_{1}\right)   & =\upsilon_{1}=\tanh x_{1},\\
\upsilon_{r}\left(  x_{1},\dots,x_{r}\right)   & =\upsilon_{r}=\tanh x_{r}%
\sqrt{\left(  1-\tanh^{2}x_{1}\right)  \left(  1-\tanh^{2}x_{2}\right)
\cdots \left(  1-\tanh^{2}x_{r-1}\right)  },
\end{align*}
for $r\geq2$.

From the latter expression, we can write $f_{r}$ defined in \eqref{15} in terms of $f_{r-1}$ in the
following forms%
\begin{multline}\label{g1}
 f_{r}\left(  x_{1},\dots,x_{r};n_{1},\dots,n_{r},a,\mu \right)  \\
 =\left(  1-\tanh^{2}x_{1}\right)  ^{a+\frac{n_{2}+\cdots +n_{r}}{2}+\frac
{r-1}{4}}C_{n_{1}}^{\left(  n_{2}+\cdots +n_{r}+\mu+\frac{r-1}{2}\right)  }\left(
\tanh x_{1}\right)  \\
 \times f_{r-1}\left(  x_{2},\dots,x_{r};n_{2},\dots,n_{r},a,\mu \right),
\end{multline}
or%
\begin{multline}\label{g2}
 f_{r}\left(  x_{1},\dots,x_{r};n_{1},\dots,n_{r},a,\mu \right)  \\
 =\left(  1-\tanh^{2}x_{r}\right)  ^{a}C_{n_{r}}^{\left(  \mu \right)
}\left(  \tanh x_{r}\right)  \\
 \times f_{r-1}\left(  x_{1},\dots,x_{r-1};n_{1},\dots,n_{r-1},a+\frac{n_{r}}%
{2}+\frac{1}{4},\mu+n_{r}+\frac{1}{2}\right),
\end{multline}
for $r\geq1$ where the univariate Gegenbauer polynomials $C_{n}^{\left(  \lambda \right)  }\left(  x\right)  $ are defined in \eqref{3}. For $r=1$,
\[
f_{1}\left(  x_{1};n_{1},a,\mu \right)  =\left(  1-\tanh^{2}x_{1}\right)
^{a}C_{n_{1}}^{\left(  \mu \right)  }\left(  \tanh x_{1}\right).
\]

\subsection{The Fourier Transform of Orthogonal Polynomials on the Unit Ball}

The Fourier transform of a given univariate function $f(x)$ is defined by \cite[p.111, Eq. (7.1)]{3}%
\begin{equation}\label{16}
\mathcal{F}
\left(  f\left(  x\right)  \right)  =\int \limits_{-\infty}^{\infty}e^{-i\xi x}f\left(  x\right)  dx.
\end{equation}
In the $r$-variable case, the Fourier transform of a given multivariate function $f(x_{1},\dots,x_{r})$ is defined by \cite[p. 182, Eq. (11.1a)]{3}
\begin{equation}\label{17}
\mathcal{F}
\left(  f\left(  x_{1},\dots,x_{r}\right)  \right)  =\int \limits_{-\infty
}^{\infty}\cdots \int \limits_{-\infty}^{\infty}e^{-i\left(  \xi_{1}x_{1}%
+\cdots+\xi_{r}x_{r}\right)  }f\left(  x_{1},\dots,x_{r}\right)  dx_{1}\cdots dx_{r}.
\end{equation}

Next, we calculate the Fourier transform of the function $f_{r}\left(  \mathbf{x};\mathbf{n},a,\mu \right)  $ defined in \eqref{15} by using the induction method. In doing so, we first start with the following theorem.

\begin{theorem}\label{theorem31}
Let $f_{r}\left(  \mathbf{x};\mathbf{n},a,\mu \right)$ be defined in \eqref{15}. The following result holds true%
\begin{multline}\label{F1}
\mathcal{F}%
\left(  f_{r}\left(  \mathbf{x};\mathbf{n},a,\mu \right)  \right)     =\mathcal{F}%
\left(  f_{r}\left(  x_{1},\dots,x_{r};n_{1},\dots,n_{r},a,\mu \right)  \right) \\
  =\frac{2^{\left \vert \mathbf{n}^{2}\right \vert +2a+\frac{r-3}{2}}\left(
2\left(  \left \vert \mathbf{n}^{2}\right \vert +\mu+\frac{r-1}{2}\right)
\right)  _{n_{1}}}{n_{1}!} \\
  \times B\left(  a+\frac{\left \vert \mathbf{n}^{2}\right \vert +i\xi_{1}}%
{2}+\frac{r-1}{4},a+\frac{\left \vert \mathbf{n}^{2}\right \vert -i\xi_{1}}%
{2}+\frac{r-1}{4}\right)   \\
  \times \ _{3}F_{2}\left(
\genfrac{}{}{0pt}{0}{-n_{1},\ n_{1}+2\left(  \left \vert \mathbf{n}%
^{2}\right \vert +\mu+\frac{r-1}{2}\right)  ,\ a+\frac{\left \vert
\mathbf{n}^{2}\right \vert +i\xi_{1}}{2}+\frac{r-1}{4}}{\left \vert
\mathbf{n}^{2}\right \vert +2a+\frac{r-1}{2},\  \left \vert \mathbf{n}%
^{2}\right \vert +\mu+\frac{r}{2}} \mid1\right)   \\
\times \mathcal{F} \left(  f_{r-1}\left(  x_{2},\dots,x_{r};n_{2},\dots,n_{r},a,\mu \right)  \right),
\end{multline}
or
\begin{multline} \label{F2}
\mathcal{F}
\left(  f_{r}\left(  \mathbf{x};\mathbf{n},a,\mu \right)  \right)  =
\mathcal{F} \left(  f_{r}\left(  x_{1},\dots,x_{r};n_{1},\dots,n_{r},a,\mu \right)  \right) \\
=\frac{2^{2a-1}\left(  2\mu \right)  _{n_{r}}}{n_{r}!}B\left(  a+\frac{i\xi_{r}}{2},a-\frac{i\xi_{r}}{2}\right)  \\
\times \ _{3}F_{2}\left(\genfrac{}{}{0pt}{0}{-n_{r},\ n_{r}+2\mu,\ a+\frac{i\xi_{r}}{2}}{2a,\  \mu
+\frac{1}{2}}\mid1\right)   \\
\times \mathcal{F} \left(  f_{r-1}\left(  x_{1},\dots,x_{r-1};n_{1},\dots,n_{r-1},a+\frac{n_{r}}
{2}+\frac{1}{4},\mu+n_{r}+\frac{1}{2}\right)  \right)  .
\end{multline}

\end{theorem}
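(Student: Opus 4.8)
The plan is to prove \eqref{F1} and \eqref{F2} in parallel, by reducing each $r$-fold Fourier integral to a single one-dimensional integral times $\mathcal{F}(f_{r-1})$, and then evaluating that one-dimensional integral in closed form. \textbf{Step 1 (Factorization).} For \eqref{F1} I would insert the recursion \eqref{g1} into the definition \eqref{17}. Since $f_{r-1}(x_2,\dots,x_r;n_2,\dots,n_r,a,\mu)$ does not depend on $x_1$, while the remaining factor $(1-\tanh^{2}x_1)^{a+|\mathbf{n}^2|/2+(r-1)/4}\,C_{n_1}^{(\lambda_1)}(\tanh x_1)$, with $\lambda_1=|\mathbf{n}^2|+\mu+\tfrac{r-1}{2}$, depends only on $x_1$, the $r$-fold integral splits as the $x_1$-integral against $e^{-i\xi_1 x_1}$ times $\mathcal{F}\big(f_{r-1}(x_2,\dots,x_r;\dots)\big)$. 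Likewise, for \eqref{F2} I would use \eqref{g2}, isolating $x_r$ together with the factor $(1-\tanh^{2}x_r)^{a}C_{n_r}^{(\mu)}(\tanh x_r)$ and leaving $\mathcal{F}\big(f_{r-1}(x_1,\dots,x_{r-1};n_1,\dots,n_{r-1},a+\tfrac{n_r}{2}+\tfrac14,\mu+n_r+\tfrac12)\big)$. Since $(1-\tanh^{2}x)^{b}=\operatorname{sech}^{2b}x$ decays exponentially, each one-dimensional integral converges absolutely for $\Re b>0$ (which holds on the admissible parameter ranges), so the factorization is legitimate.

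\textbf{Step 2 (The key one-dimensional integral).} Both cases reduce to evaluating
\[
J(\xi,n,b,\lambda):=\int_{-\infty}^{\infty}e^{-i\xi x}\,(1-\tanh^{2}x)^{b}\,C_{n}^{(\lambda)}(\tanh x)\,dx .
\]
I would substitute $t=\tanh x$, using $dx=(1-t^{2})^{-1}dt$ and $e^{-i\xi x}=(1+t)^{-i\xi/2}(1-t)^{i\xi/2}$, turning $J$ into $\int_{-1}^{1}(1+t)^{b-1-i\xi/2}(1-t)^{b-1+i\xi/2}C_n^{(\lambda)}(t)\,dt$. Then I would insert the hypergeometric representation \eqref{hyper} of $C_n^{(\lambda)}$ and change variables once more by $u=(1-t)/2$; collecting the powers of $2$ produces the clean prefactor $2^{2b-1}(2\lambda)_n/n!$ and leaves $\int_0^1 u^{b-1+i\xi/2}(1-u)^{b-1-i\xi/2}\,{}_2F_1\big(-n,n+2\lambda;\lambda+\tfrac12;u\big)\,du$. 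Because the ${}_2F_1$ terminates it is a polynomial in $u$, so it may be expanded and integrated term by term; the $k$-th term is a Beta integral $B\big(b+\tfrac{i\xi}{2}+k,\,b-\tfrac{i\xi}{2}\big)=B\big(b+\tfrac{i\xi}{2},b-\tfrac{i\xi}{2}\big)\,(b+\tfrac{i\xi}{2})_k/(2b)_k$, and the resulting sum collapses to a ${}_3F_2$ at $1$:
\[
J(\xi,n,b,\lambda)=\frac{2^{2b-1}(2\lambda)_{n}}{n!}\,B\!\left(b+\tfrac{i\xi}{2},\,b-\tfrac{i\xi}{2}\right)\,{}_{3}F_{2}\left(\genfrac{}{}{0pt}{0}{-n,\ n+2\lambda,\ b+\frac{i\xi}{2}}{\lambda+\frac{1}{2},\ 2b}\mid 1\right).
\]

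\textbf{Step 3 (Specialization) and the main obstacle.} Putting $b=a+\tfrac{|\mathbf{n}^2|}{2}+\tfrac{r-1}{4}$, $\lambda=|\mathbf{n}^2|+\mu+\tfrac{r-1}{2}$, $\xi=\xi_1$, $n=n_1$ recovers exactly the scalar prefactor of \eqref{F1}, because then $2b-1=|\mathbf{n}^2|+2a+\tfrac{r-3}{2}$, $\lambda+\tfrac12=|\mathbf{n}^2|+\mu+\tfrac r2$, $2b=|\mathbf{n}^2|+2a+\tfrac{r-1}{2}$ and $b\pm\tfrac{i\xi_1}{2}=a+\tfrac{|\mathbf{n}^2|\pm i\xi_1}{2}+\tfrac{r-1}{4}$; putting $b=a$, $\lambda=\mu$, $\xi=\xi_r$, $n=n_r$ recovers the prefactor of \eqref{F2}. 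Combining with Step 1 completes the proof. The only genuinely delicate point is the bookkeeping in Step 2: verifying that the two successive substitutions produce precisely the power $2^{2b-1}$, that $(2\lambda)_n/n!$ survives unchanged, and that after termwise integration the Beta factor pulls out cleanly so the residual series is exactly the stated ${}_3F_2$ with lower parameters $\lambda+\tfrac12$ and $2b$; everything else is a routine change of variables, with no convergence subtlety beyond $\Re b>0$.
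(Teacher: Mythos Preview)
Your proposal is correct and follows essentially the same approach as the paper: factorize via \eqref{g1} (resp.\ \eqref{g2}) to peel off a one-variable integral times $\mathcal{F}(f_{r-1})$, substitute $t=\tanh x$ so that $e^{-i\xi x}=(1+t)^{-i\xi/2}(1-t)^{i\xi/2}$, insert the hypergeometric form \eqref{hyper} of the Gegenbauer polynomial, change to $u=(1-t)/2$, and evaluate term by term as Beta integrals to obtain the ${}_3F_2$. Your packaging of the univariate computation as the master formula $J(\xi,n,b,\lambda)$ and the explicit note on absolute convergence ($\Re b>0$) are tidier than the paper's inline version, but the argument is the same.
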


\begin{proof}
By using \eqref{g1}, the Fourier transform of the function $f_{r}$ defined in \eqref{15} can be calculated as follows by using relation \eqref{hyper}
\begin{multline*}
\mathcal{F} \left(  f_{r}\left(  x_{1},\dots,x_{r};n_{1},\dots,n_{r},a,\mu \right)  \right) \\
 =\int \limits_{-\infty}^{\infty}\cdots \int \limits_{-\infty}^{\infty
}e^{-i\left(  \xi_{1}x_{1}+\cdots+\xi_{r}x_{r}\right)  }\left(  1-\tanh^{2}%
x_{1}\right)  ^{a+\frac{n_{2}+\cdots+n_{r}}{2}+\frac{r-1}{4}}\\
  \times C_{n_{1}}^{\left(  \mu+n_{2}+\cdots+n_{r}+\frac{r-1}{2}\right)
}\left(  \tanh x_{1}\right)  f_{r-1}\left(  x_{2},\dots,x_{r};n_{2}%
,\dots,n_{r},a,\mu \right)  dx_{r}\cdots dx_{1}\\
  =\int \limits_{-\infty}^{\infty}e^{-i\xi_{1}x_{1}}\left(  1-\tanh^{2}%
x_{1}\right)  ^{a+\frac{n_{2}+\cdots +n_{r}}{2}+\frac{r-1}{4}}
   C_{n_{1}}^{\left(  \mu+n_{2}+\cdots +n_{r}+\frac{r-1}{2}\right)
}\left(  \tanh x_{1}\right)  dx_{1}\\
  \times \int \limits_{-\infty}^{\infty}\cdots \int \limits_{-\infty}^{\infty
}e^{-i\left(  \xi_{2}x_{2}+\cdots+\xi_{r}x_{r}\right)  }f_{r-1}\left(
x_{2},\dots,x_{r};n_{2},\dots,n_{r},a,\mu \right)  dx_{r}\cdots dx_{2}\\
  =\mathcal{F} \left(  f_{r-1}\left(  x_{2},\dots,x_{r};n_{2},\dots,n_{r},a,\mu \right)  \right)\\
  \times \int \limits_{-1}^{1}\left(  1+u\right)  ^{a+\frac{n_{2}%
+\cdots+n_{r}-i\xi_{1}}{2}+\frac{r-5}{4}}\left(  1-u\right)  ^{a+\frac
{n_{2}+\cdots +n_{r}+i\xi_{1}}{2}+\frac{r-5}{4}}
   C_{n_{1}}^{\left(  \mu+n_{2}+\cdots +n_{r}+\frac{r-1}{2}\right)
}\left(  u\right)  du \\
  =\frac{2^{n_{2}+\cdots+n_{r}+2a+\frac{r-3}{2}}\left(  2\left(  \mu
+n_{2}+\cdots+n_{r}+\frac{r-1}{2}\right)  \right)  _{n_{1}}}{n_{1}!}
 \mathcal{F} \left(  f_{r-1}\left(  x_{2},\dots,x_{r};n_{2},\dots,n_{r},a,\mu \right)  \right) \\
  \times \sum \limits_{l=0}^{n_{1}}\frac{\left(  -n_{1}\right)  _{l}\left(
n_{1}+2\left(  \mu+n_{2}+\cdots+n_{r}\right)  +r-1\right)  _{l}}{l!\left(
\mu+n_{2}+\cdots+n_{r}+\frac{r}{2}\right)  _{l}}\\
  \times \int \limits_{0}^{1}\left(  1-t\right)  ^{a+\frac{n_{2}+\cdots+n_{r}%
-i\xi_{1}}{2}+\frac{r-5}{4}}t^{a+\frac{n_{2}+\cdots+n_{r}+i\xi_{1}}{2}+\frac
{r-5}{4}+l}dt\\
  =\frac{2^{n_{2}+\cdots+n_{r}+2a+\frac{r-3}{2}}\left(  2\left(  \mu
+n_{2}+\cdots+n_{r}+\frac{r-1}{2}\right)  \right)  _{n_{1}}}{n_{1}!}\\
  \times%
\mathcal{F}%
\left(  f_{r-1}\left(  x_{2},\dots,x_{r};n_{2},\dots,n_{r},a,\mu \right)  \right)
\\
  \times B\left(  a+\frac{n_{2}+\cdots+n_{r}+i\xi_{1}}{2}+\frac{r-1}{4}%
,a+\frac{n_{2}+\cdots+n_{r}-i\xi_{1}}{2}+\frac{r-1}{4}\right) \\
  \times \ _{3}F_{2}\left(
\genfrac{}{}{0pt}{0}{-n_{1},n_{1}+2\left(  \mu+n_{2}+\cdots+n_{r}\right)
+r-1,a+\frac{n_{2}+\cdots+n_{r}+i\xi_{1}}{2}+\frac{r-1}{4}}{\mu+n_{2}%
+\cdots+n_{r}+\frac{r}{2},2a+n_{2}+\cdots+n_{r}+\frac{r-1}{2}}%
\mid1\right)  ,
\end{multline*}
which proves \eqref{F1}. Similarly, when we repeat this process by using the
equation (\ref{g2}), it follows

\begin{multline*}
\mathcal{F}
\left(  f_{r}\left(  x_{1},\dots,x_{r};n_{1},\dots,n_{r},a,\mu \right)  \right) \\
  =\int \limits_{-1}^{1}\left(  1-u\right)  ^{a+\frac{i\xi_{r}}{2}-1}\left(
1+u\right)  ^{a-\frac{i\xi_{r}}{2}-1}C_{n_{r}}^{\left(  \mu \right)  }\left(
u\right)  du\\
\times \mathcal{F} \left(  f_{r-1}\left(  x_{1},\dots,x_{r-1};n_{1},\dots,n_{r-1},a+\frac{n_{r}}{2}+\frac{1}{4},\mu+n_{r}+\frac{1}{2}\right)  \right) \\
  =\frac{2^{2a-1}\left(  2\mu \right)  _{n_{r}}}{n_{r}!}B\left(  a+\frac
{i\xi_{r}}{2},a-\frac{i\xi_{r}}{2}\right)  \ _{3}F_{2}\left( \genfrac{}{}{0pt}{0}{-n_{r},\ n_{r}+2\mu,\ a+\frac{i\xi_{r}}{2}}{\mu+1/2,\ 2a} \mid1\right) \\
  \times
\mathcal{F}
\left(  f_{r-1}\left(  x_{1},\dots,x_{r-1};n_{1},\dots,n_{r-1},a+\frac{n_{r}} {2}+\frac{1}{4},\mu+n_{r}+\frac{1}{2}\right)  \right)  .
\end{multline*}

\end{proof}

By applying Theorem \ref{theorem31} consecutively, we can give the next theorem.

\begin{theorem}
The Fourier transform of the function $f_{r}\left(  \mathbf{x};\mathbf{n} ,a,\mu \right)$ defined in \eqref{15} is explicitly given as follows%
\begin{multline} \label{18}
\mathcal{F}
\left(  f_{r}\left(  \mathbf{x};\mathbf{n},a,\mu \right)  \right)  =
\mathcal{F}
\left(  f_{r}\left(  x_{1},\dots,x_{r};n_{1},\dots,n_{r},a,\mu \right)  \right) \\
=2^{2ra+\frac{r\left(  r-5\right)  }{4}+
{\textstyle \sum \limits_{j=1}^{r-1}} {jn}_{j+1}}\prod \limits_{j=1}^{r}\left \{  \frac{\left(  2\left(
\left \vert \mathbf{n}^{j+1}\right \vert +\mu+\frac{r-j}{2}\right)  \right)
_{n_{j}}}{n_{j}!}\Theta_{j}^{r}\left(  a,\mu,\mathbf{n};\xi_{j}\right)
\right \}  ,
\end{multline}
where
\begin{multline*}
\Theta_{j}^{r}\left(  a,\mu,\mathbf{n};\xi_{j}\right)  =B\left(
a+\frac{\left \vert \mathbf{n}^{j+1}\right \vert +i\xi_{j}}{2}+\frac{r-j}
{4},a+\frac{\left \vert \mathbf{n}^{j+1}\right \vert -i\xi_{j}}{2}+\frac{r-j}
{4}\right)  \\
\times \ _{3}F_{2}\left(
\genfrac{}{}{0pt}{0}{-n_{j},\ n_{j}+2\left(  \left \vert \mathbf{n}%
^{j+1}\right \vert +\mu+\frac{r-j}{2}\right)  ,\ a+\frac{\left \vert
\mathbf{n}^{j+1}\right \vert +i\xi_{j}}{2}+\frac{r-j}{4}}{\left \vert
\mathbf{n}^{j+1}\right \vert +\mu+\frac{r-j+1}{2},\  \left \vert \mathbf{n}%
^{j+1}\right \vert +2a+\frac{r-j}{2}}%
\mid1\right)  ,
\end{multline*}
which can be also expressed in terms of the continuous Hahn polynomials defined in \eqref{hahn}%
\begin{multline*}
\Theta_{j}^{r}\left(  a,\mu,\mathbf{n};\xi_{j}\right)  =\frac{n_{j}!}%
{i^{n_{j}}\left(  \left \vert \mathbf{n}^{j+1}\right \vert +\mu+\frac{r-j+1}%
{2}\right)  _{n_{j}}\left(  \left \vert \mathbf{n}^{j+1}\right \vert
+2a+\frac{r-j}{2}\right)  _{n_{j}}}   \\
\times B\left(  a+\frac{\left \vert \mathbf{n}^{j+1}\right \vert +i\xi_{j}}%
{2}+\frac{r-j}{4},a+\frac{\left \vert \mathbf{n}^{j+1}\right \vert -i\xi_{j}}%
{2}+\frac{r-j}{4}\right)    \\
\times p_{n_{j}}\left(  \frac{\xi_{j}}{2};a+\frac{\left \vert \mathbf{n}%
^{j+1}\right \vert }{2}+\frac{r-j}{4},\mu-a+\frac{\left \vert \mathbf{n}%
^{j+1}\right \vert +1}{2}+\frac{r-j}{4}\right.    \\
\left.  ,\mu-a+\frac{\left \vert \mathbf{n}^{j+1}\right \vert +1}{2}+\frac
{r-j}{4},a+\frac{\left \vert \mathbf{n}^{j+1}\right \vert }{2}+\frac{r-j}%
{4}\right)  .
\end{multline*}

\end{theorem}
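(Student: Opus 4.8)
The plan is to prove \eqref{18} by induction on $r$, using the one-step reduction \eqref{F1} of Theorem~\ref{theorem31} as the engine; the crucial point is that \eqref{F1} leaves the parameters $a$ and $\mu$ unchanged, which is exactly what the closed form \eqref{18} needs (so \eqref{F1}, not \eqref{F2}, is the right recursion to iterate here). For the base case $r=1$ I would compute $\mathcal{F}(f_1(x_1;n_1,a,\mu))$ directly: the substitution $u=\tanh x_1$ turns the integral into $\int_{-1}^{1}(1-u)^{a+\frac{i\xi_1}{2}-1}(1+u)^{a-\frac{i\xi_1}{2}-1}C_{n_1}^{(\mu)}(u)\,du$, and expanding $C_{n_1}^{(\mu)}$ via \eqref{hyper} and integrating term by term with $\int_{-1}^{1}(1-u)^{p-1+l}(1+u)^{q-1}\,du=2^{p+q-1+l}B(p+l,q)$ reassembles a ${}_3F_2$; this is precisely \eqref{18} at $r=1$, and in fact this computation is the one already carried out inside the proof of Theorem~\ref{theorem31}.

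For the inductive step, assume \eqref{18} with $r$ replaced by $r-1$. Applying \eqref{F1} once extracts the factor $\dfrac{2^{|\mathbf{n}^2|+2a+\frac{r-3}{2}}\bigl(2(|\mathbf{n}^2|+\mu+\frac{r-1}{2})\bigr)_{n_1}}{n_1!}\,\Theta_1^r(a,\mu,\mathbf{n};\xi_1)$ times $\mathcal{F}\bigl(f_{r-1}(x_2,\dots,x_r;n_2,\dots,n_r,a,\mu)\bigr)$. I would then feed the $(r-1)$-variable transform to the induction hypothesis after the relabelling $y_k=x_{k+1}$, $m_k=n_{k+1}$; under it one has $|\mathbf{m}^{k+1}|=|\mathbf{n}^{k+2}|$, and a short check of the arguments — in particular $\frac{(r-1)-k}{4}=\frac{r-(k+1)}{4}$ in the Beta factor, together with the analogous identities in the two lower ${}_3F_2$ parameters — shows that $\Theta_k^{r-1}$ for the $y$-instance equals $\Theta_{k+1}^r$ for the original, and likewise for the Pochhammer-over-$n_k!$ factors. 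Hence the product $\prod_{k=1}^{r-1}\{\cdots\}$ coming from the induction hypothesis reindexes to $\prod_{j=2}^{r}\{\cdots\}$, and combined with the $j=1$ factor above this is $\prod_{j=1}^{r}\{\cdots\}$.

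It then only remains to collect the powers of $2$: the contribution $2^{|\mathbf{n}^2|+2a+\frac{r-3}{2}}$ from \eqref{F1} and the contribution $2^{2(r-1)a+\frac{(r-1)(r-6)}{4}+\sum_{j=1}^{r-2}jn_{j+2}}$ from the induction hypothesis combine via the elementary identities $2a+2(r-1)a=2ra$, $\frac{r-3}{2}+\frac{(r-1)(r-6)}{4}=\frac{r(r-5)}{4}$, and $|\mathbf{n}^2|+\sum_{j=1}^{r-2}jn_{j+2}=\sum_{j=1}^{r-1}jn_{j+1}$ (the last by writing $|\mathbf{n}^2|=\sum_{m=2}^{r}n_m$ and reindexing), which is exactly the exponent in \eqref{18}. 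For the continuous Hahn reformulation of $\Theta_j^r$ I would match the ${}_3F_2$ against \eqref{hahn} with $n=n_j$, argument $\xi_j/2$, and $a_{\mathrm{H}}=d_{\mathrm{H}}=a+\frac{|\mathbf{n}^{j+1}|}{2}+\frac{r-j}{4}$, $b_{\mathrm{H}}=c_{\mathrm{H}}=\mu-a+\frac{|\mathbf{n}^{j+1}|+1}{2}+\frac{r-j}{4}$: then $a_{\mathrm{H}}+c_{\mathrm{H}}$ and $a_{\mathrm{H}}+d_{\mathrm{H}}$ give the two lower parameters, $n_j+a_{\mathrm{H}}+b_{\mathrm{H}}+c_{\mathrm{H}}+d_{\mathrm{H}}-1$ gives the middle upper parameter, and $a_{\mathrm{H}}+i\xi_j/2$ gives the remaining one; solving \eqref{hahn} for the ${}_3F_2$ and multiplying back the Beta factor yields the stated formula. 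The only genuinely delicate point I anticipate is the index bookkeeping in the inductive step (the interaction of the shift $j\mapsto j+1$, the drop $r\mapsto r-1$, and the superscripts on $\mathbf{n}^{\bullet}$); the exponent arithmetic and the Hahn parameter matching are routine once written out.
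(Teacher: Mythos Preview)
Your proposal is correct and follows essentially the same approach as the paper: induction on $r$ using the recursion \eqref{F1} of Theorem~\ref{theorem31} (the paper verifies $r=1$ and $r=2$ explicitly and then just writes ``The proof follows now by induction on $r$''). In fact you supply more detail than the paper does---your verification of the index shift $\Theta_k^{r-1}\leadsto\Theta_{k+1}^r$, the exponent-of-$2$ arithmetic, and the continuous Hahn parameter matching are all correct and fill in what the paper leaves implicit.
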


\begin{proof}
The proof follows by induction on $r$ by applying Theorem \ref{theorem31} successively. For $r=1$ the Fourier transform of
\begin{equation}\label{g1dim}
f_{1}\left(  x_{1};n_{1},a,\mu \right)  =\left(  1-\tanh^{2}x_{1}\right)
^{a}\ C_{n_{1}}^{\left(  \mu \right)  }\left(  \tanh x_{1}\right)
\end{equation}
follows from \eqref{hyper} (see \cite{7})%
\begin{multline}\label{gF1dim}
\mathcal{F}%
\left(  f_{1}\left(  x_{1};n_{1},a,\mu \right)  \right)  =\int \limits_{-\infty}^{\infty}e^{-i\xi_{1}x_{1}}\left(  1-\tanh^{2}x_{1}\right) ^{a}\ C_{n_{1}}^{\left(  \mu \right)  }\left(  \tanh x_{1}\right) dx_{1} \\
 =\frac{2^{2a-1}\left(  2\mu \right)  _{n_{1}}}{n_{1}!}\Theta_{1}^{1}\left( a,\mu,n_{1};\xi_{1}\right)  ,
\end{multline}
where
\[
\Theta_{1}^{1}\left(  a,\mu,n_{1};\xi_{1}\right)  =\ _{3}F_{2}\left(
\genfrac{}{}{0pt}{0}{-n_{1},\ n_{1}+2\mu,\ a+\frac{i\xi_{1}}{2}}{2a,\  \mu+1/2}%
\mid1\right)  B\left(  a+\frac{i\xi_{1}}{2},\ a-\frac{i\xi_{1}}{2}\right).
\]
It can be rewritten \cite{7} in terms of the continuous Hahn polynomials $p_{n}\left(  x;a,b,c,d\right)$ as
\begin{multline*}%
\mathcal{F}%
\left(  f_{1}\left(  x_{1};n_{1},a,\mu \right)  \right)  =\frac {2^{2a-1}\left(  2\mu \right)  _{n_{1}}}{i^{n_{1}}\left(  2a\right)  _{n_{1} }\left(  \mu+1/2\right)  _{n_{1}}}B\left(  a+\frac{i\xi_{1}}{2},\ a-\frac
{i\xi_{1}}{2}\right)  \\
\times p_{n_{1}}\left(  \frac{\xi_{1}}{2};a,\mu-a+1/2,\mu-a+1/2,a\right)  .
\end{multline*}
For the case $r=2,$ in view of \eqref{g1} we can write
\[
f_{2}\left(  x_{1},x_{2};n_{1},n_{2},a,\mu \right)  =\left(  1-\tanh^{2}%
x_{1}\right)  ^{a+\frac{n_{2}}{2}+\frac{1}{4}}C_{n_{1}}^{\left(  n_{2}%
+\mu+\frac{1}{2}\right)  }\left(  \tanh x_{1}\right)  f_{1}\left(  x_{2}%
;n_{2},a,\mu \right).
\]
By using now \eqref{F1} it yields
\begin{multline*}
\mathcal{F}%
\left(  f_{2}\left(  x_{1},x_{2};n_{1},n_{2},a,\mu \right)  \right)
=\frac{2^{n_{2}+2a-\frac{1}{2}}\left(  2\left(  n_{2}+\mu+\frac{1}{2}\right)
\right)  _{n_{1}}}{n_{1}!}
\mathcal{F}%
\left(  f_{1}\left(  x_{2};n_{2},a,\mu \right)  \right)  \\
\times B\left(  a+\frac{n_{2}+i\xi_{1}}{2}+\frac{1}{4},a+\frac{n_{2}-i\xi_{1}%
}{2}+\frac{1}{4}\right)  \\
\times \ _{3}F_{2}\left(
\genfrac{}{}{0pt}{0}{-n_{1},\ n_{1}+2\left(  n_{2}+\mu+\frac{1}{2}\right)
,\ a+\frac{n_{2}+i\xi_{1}}{2}+\frac{1}{4}}{n_{2}+2a+\frac{1}{2},\ n_{2}+\mu+1}%
\mid1\right)  .
\end{multline*}
{}From \eqref{gF1dim}, we can write%
\begin{multline} \label{biv-fourier}
\mathcal{F} \left(  f_{2}\left(  x_{1},x_{2};n_{1},n_{2},a,\mu \right)  \right) \\
  =\frac{2^{n_{2}+4a-\frac{3}{2}}\left(  2\mu \right)  _{n_{2}}\left(
2\left(  n_{2}+\mu+\frac{1}{2}\right)  \right)  _{n_{1}}}{n_{1}!n_{2}%
!}B\left(  a+\frac{n_{2}+i\xi_{1}}{2}+\frac{1}{4},a+\frac{n_{2}-i\xi_{1}}%
{2}+\frac{1}{4}\right) \\
\times~_{3}F_{2}\left(
\genfrac{}{}{0pt}{0}{-n_{1},\ n_{1}+2\left(  n_{2}+\mu+\frac{1}{2}\right)
,\ a+\frac{n_{2}+i\xi_{1}}{2}+\frac{1}{4}}{\genfrac{}{}{0pt}{0}{{}%
}{n_{2}+2a+\frac{1}{2},\ n_{2}+\mu+1}}%
\mid1\right)  \\
\times B\left(  a+\frac{i\xi_{2}}{2},\ a-\frac{i\xi_{2}}{2}\right)
\ _{3}F_{2}\left(
\genfrac{}{}{0pt}{0}{-n_{2},\ n_{2}+2\mu,\ a+\frac{i\xi_{2}}{2}}{2a,\  \mu+1/2}%
\mid1\right)  \\
  =\frac{2^{n_{2}+4a-\frac{3}{2}}\left(  2\mu \right)  _{n_{2}}\left(
2\left(  n_{2}+\mu+\frac{1}{2}\right)  \right)  _{n_{1}}}{n_{1}!n_{2}!}%
\Theta_{1}^{2}\left(  a,\mu,n_{1},n_{2};\xi_{1}\right)  \Theta_{2}^{2}\left(
a,\mu,n_{1},n_{2};\xi_{2}\right)
\end{multline}
where%
\begin{multline*}
\Theta_{1}^{2}\left(  a,\mu,n_{1},n_{2};\xi_{1}\right)  =B\left(
a+\frac{n_{2}+i\xi_{1}}{2}+\frac{1}{4},a+\frac{n_{2}-i\xi_{1}}{2}+\frac{1}%
{4}\right)  \\
 \times_{3}F_{2}\left( \genfrac{}{}{0pt}{0}{-n_{1},\ n_{1}+2\left(  n_{2}+\mu+\frac{1}{2}\right)
,\ a+\frac{n_{2}+i\xi_{1}}{2}+\frac{1}{4}}{n_{2}+2a+\frac{1}{2},\ n_{2}+\mu+1}%
\mid1\right)
\end{multline*}
and%
\[
\Theta_{2}^{2}\left(  a,\mu,n_{1},n_{2};\xi_{2}\right)  =B\left(  a+\frac
{i\xi_{2}}{2},\ a-\frac{i\xi_{2}}{2}\right)  \ _{3}F_{2}\left(
\genfrac{}{}{0pt}{0}{-n_{2},\ n_{2}+2\mu,\ a+\frac{i\xi_{2}}{2}}{2a,\  \mu+1/2}
\mid1\right)  .
\]
Both expressions can be written again in terms of the continuous Hahn polynomials \eqref{hahn} as
\begin{multline*}
\Theta_{1}^{2}\left(  a,\mu,n_{1},n_{2};\xi_{1}\right)  =\dfrac{n_{1}%
!}{i^{n_{1}}\left(  n_{2}+\mu+1\right)  _{n_{1}}\left(  n_{2}+2a+\frac{1}%
{2}\right)  _{n_{1}}} \\
 \times B\left(  a+\frac{n_{2}+i\xi_{1}}{2}+\frac{1}{4},a+\frac{n_{2}-i\xi_{1}%
}{2}+\frac{1}{4}\right)  \\
\times p_{n_{1}}\left(  \frac{\xi_{1}}{2};a+\frac{n_{2}}{2}+\frac{1}{4}%
,\mu-a+\frac{2n_{2}+3}{4},\mu-a+\frac{2n_{2}+3}{4},a+\frac{n_{2}}{2}+\frac
{1}{4}\right)
\end{multline*}
and%
\begin{multline*}
\Theta_{2}^{2}\left(  a,\mu,n_{1},n_{2};\xi_{2}\right)  =\frac{n_{2}%
!}{i^{n_{2}}\left(  \mu+\frac{1}{2}\right)  _{n_{2}}\left(  2a\right)
_{n_{2}}}B\left(  a+\frac{i\xi_{2}}{2},a-\frac{i\xi_{2}}{2}\right)    \\
\times p_{n_{2}}\left(  \frac{\xi_{2}}{2};a,\mu-a+\frac{1}{2},\mu-a+\frac
{1}{2},a\right)  .
\end{multline*}
The proof follows now by induction on $r$.
\end{proof}

\subsection{The class of special functions using Fourier transform of the orthogonal polynomials on the unit ball}

The Parseval identity corresponding to \eqref{16} is given by \cite[p.118, Eq. (7.17)]{3}%
\begin{equation}\label{monoparseval}
\int \limits_{-\infty}^{\infty}f\left(  x\right)  \overline{g\left(  x\right)
}dx=\frac{1}{2\pi}\int \limits_{-\infty}^{\infty}%
\mathcal{F}%
\left(  f\left(  x\right)  \right)  \overline{%
\mathcal{F}%
\left(  g\left(  x\right)  \right)  }d\xi,%
\end{equation}
and in $r$-variable case, Parseval's identity corresponding to \eqref{17}
is \cite[p. 183, (iv)]{3}%
\begin{multline}\label{multi}
 \int \limits_{-\infty}^{\infty}\cdots \int \limits_{-\infty}^{\infty}f\left(
x_{1},\dots,x_{r}\right)  \overline{g\left(  x_{1},\dots,x_{r}\right)  }%
dx_{1}\cdots dx_{r} \\
 =\frac{1}{\left(  2\pi \right)  ^{r}}\int \limits_{-\infty}^{\infty} \cdots \int \limits_{-\infty}^{\infty}%
\mathcal{F} \left(  f\left(  x_{1},\dots,x_{r}\right)  \right)  \overline{ \mathcal{F} \left(  g\left(  x_{1},\dots,x_{r}\right)  \right)  }d\xi_{1}\cdots d\xi_{r}.
\end{multline}

\begin{theorem}
Let $\mathbf{n}$ and $\mathbf{n}^{j}$ be defined as in \eqref{notation}, let
$\mathbf{a=}\left(  a_{1},a_{2}\right)  $ and $\left \vert \mathbf{a}%
\right \vert =a_{1}+a_{2}$. Then, the following equality is satisfied%
\begin{multline*}
 \int \limits_{-\infty}^{\infty}\cdots \int \limits_{-\infty}^{\infty}%
\ _{r}D_{\boldsymbol{n}}\left(  i\boldsymbol{x};a_{1},a_{2}\right)
\ _{r}D_{\boldsymbol{m}}\left(  -i\boldsymbol{x};a_{2},a_{1}\right)
\mathbf{dx}
 =\left(  2\pi \right)  ^{r}2^{-2r\left \vert \mathbf{a}\right \vert
+r+1}h_{\mathbf{n}}^{\left(  a_{1}+a_{2}-\frac{1}{2}\right)  }\\
 \times \prod \limits_{j=1}^{r}\frac{\left(  n_{j}!\right)  ^{2}\Gamma \left(
\left \vert \mathbf{n}^{j+1}\right \vert +2a_{1}+\frac{r-j}{2}\right)
\Gamma \left(  \left \vert \mathbf{n}^{j+1}\right \vert +2a_{2}+\frac{r-j}%
{2}\right)  }{2^{2\left \vert \mathbf{n}^{j+1}\right \vert }\left(  \left(
2\left \vert \mathbf{n}^{j+1}\right \vert +2\left \vert \mathbf{a}\right \vert
+r-j-1\right)  _{n_{j}}\right)  ^{2}}\delta_{n_{j},m_{j}},%
\end{multline*}
for $a_{1},a_{2}>0$ where $h_{\mathbf{n}}^{\left(  a_{1}+a_{2}-\frac{1}%
{2}\right)  }$ is given in \eqref{Norm} and%
\begin{multline*}
 \ _{r}D_{\mathbf{n}}\left(  \boldsymbol{x};a_{1},a_{2}\right)
=\prod \limits_{j=1}^{r}\left \{  \Gamma \left(  a_{1}+\frac{\left \vert
\mathbf{n}^{j+1}\right \vert -x_{j}}{2}+\frac{r-j}{4}\right)  \Gamma \left(
a_{1}+\frac{\left \vert \mathbf{n}^{j+1}\right \vert +x_{j}}{2}+\frac{r-j}%
{4}\right)  \right. \\
 \times \left.  _{3}F_{2}\left(
\genfrac{}{}{0pt}{0}{-n_{j},\ n_{j}+2\left(  \left \vert \mathbf{n}%
^{j+1}\right \vert +\left \vert \mathbf{a}\right \vert +\frac{r-j-1}{2}\right)
,\ a_{1}+\frac{\left \vert \mathbf{n}^{j+1}\right \vert +x_{j}}{2}+\frac{r-j}%
{4}}{\left \vert \mathbf{n}^{j+1}\right \vert +\left \vert \mathbf{a}\right \vert
+\frac{r-j}{2},\  \left \vert \mathbf{n}^{j+1}\right \vert +2a_{1}+\frac{r-j}{2}}%
\mid1\right)  \right \},
\end{multline*}
which can be expressed in terms of the continuous Hahn polynomials \eqref{hahn} by
\begin{multline*}
\ _{r}D_{\mathbf{n}}\left(  \boldsymbol{x};a_{1},a_{2}\right)  =%
{\displaystyle \prod \limits_{j=1}^{r}}
\left \{  \dfrac{n_{j}!i^{-n_{j}}}{\left(  \left \vert \mathbf{n}^{j+1}%
\right \vert +2a_{1}+\dfrac{r-j}{2}\right)  _{n_{j}}\left(  \left \vert
\mathbf{n}^{j+1}\right \vert +\left \vert \mathbf{a}\right \vert +\dfrac{r-j}%
{2}\right)  _{n_{j}}}\right.  \\
\times \left.  \Gamma \left(  a_{1}+\dfrac{\left \vert \mathbf{n}^{j+1}%
\right \vert -x_{j}}{2}+\dfrac{r-j}{4}\right)  \Gamma \left(  a_{1}%
+\dfrac{\left \vert \mathbf{n}^{j+1}\right \vert +x_{j}}{2}+\dfrac{r-j}%
{4}\right)  \right.  \\
\times p_{n_{j}}\left(  -\dfrac{ix_{j}}{2};a_{1}+\dfrac{\left \vert
\mathbf{n}^{j+1}\right \vert }{2}+\dfrac{r-j}{4},a_{2}+\dfrac{\left \vert
\mathbf{n}^{j+1}\right \vert }{2}+\dfrac{r-j}{4}\right.  \\
 \left.  \left.  ,a_{2}+\dfrac{\left \vert \mathbf{n}^{j+1}\right \vert
}{2}+\dfrac{r-j}{4},a_{1}+\dfrac{\left \vert \mathbf{n}^{j+1}\right \vert }%
{2}+\dfrac{r-j}{4}\right)  \right \},
\end{multline*}
for $r\geq1$.
\end{theorem}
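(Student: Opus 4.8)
The plan is to apply Parseval's identity \eqref{multi} to the pair of functions $f_{r}(\boldsymbol{x};\mathbf{n},a_{1},\mu)$ and $f_{r}(\boldsymbol{x};\mathbf{m},a_{2},\mu)$ with $\mu=a_{1}+a_{2}-\tfrac12$, after noticing that ${}_{r}D_{\mathbf{n}}$ is, up to an explicit constant, the Fourier transform already computed in \eqref{18}. The assumption $a_{1},a_{2}>0$ ensures that each $f_{r}$ (a product of real powers of $1-\tanh^{2}x_{j}\in(0,1]$ and Gegenbauer polynomials with real coefficients) lies in $L^{1}\cap L^{2}$, so that the transforms and Parseval's identity are legitimate, and in particular $f_{r}$ is real-valued.

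\emph{Identification of ${}_{r}D$ with a Fourier transform.} With $\mu=|\mathbf{a}|-\tfrac12$ one has $|\mathbf{n}^{j+1}|+|\mathbf{a}|+\tfrac{r-j-1}{2}=|\mathbf{n}^{j+1}|+\mu+\tfrac{r-j}{2}$ and $|\mathbf{n}^{j+1}|+|\mathbf{a}|+\tfrac{r-j}{2}=|\mathbf{n}^{j+1}|+\mu+\tfrac{r-j+1}{2}$, so the ${}_{3}F_{2}$ occurring in the definition of ${}_{r}D_{\mathbf{n}}(i\boldsymbol{x};a_{1},a_{2})$ is exactly the one in the factor $\Theta_{j}^{r}(a_{1},\mu,\mathbf{n};x_{j})$ introduced after \eqref{18}. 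Writing each product of Gamma factors as $\Gamma(p)\Gamma(q)=B(p,q)\,\Gamma(p+q)$ with $p+q=2a_{1}+|\mathbf{n}^{j+1}|+\tfrac{r-j}{2}$ turns the $\Gamma$–prefactor into the Beta factor of $\Theta_{j}^{r}$ times $\prod_{j}\Gamma(2a_{1}+|\mathbf{n}^{j+1}|+\tfrac{r-j}{2})$, whence
\[
{}_{r}D_{\mathbf{n}}(i\boldsymbol{x};a_{1},a_{2})=\prod_{j=1}^{r}\Gamma\Big(2a_{1}+|\mathbf{n}^{j+1}|+\tfrac{r-j}{2}\Big)\,\Theta_{j}^{r}(a_{1},\mu,\mathbf{n};x_{j})=A_{\mathbf{n}}(a_{1})\,\mathcal{F}\big(f_{r}(\cdot;\mathbf{n},a_{1},\mu)\big)(\boldsymbol{x}),
\]
where $A_{\mathbf{n}}(a_{1})$ is an explicit product of Gamma values, Pochhammer symbols and a power of $2$, read off directly from \eqref{18}. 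Since swapping $a_{1}\leftrightarrow a_{2}$ leaves $|\mathbf{a}|$, hence $\mu$, unchanged while $-i\boldsymbol{x}$ merely reflects the frequency, one gets analogously ${}_{r}D_{\mathbf{m}}(-i\boldsymbol{x};a_{2},a_{1})=A_{\mathbf{m}}(a_{2})\,\mathcal{F}\big(f_{r}(\cdot;\mathbf{m},a_{2},\mu)\big)(-\boldsymbol{x})$, and $\mathcal{F}(f_{r}(\cdot;\mathbf{m},a_{2},\mu))(-\boldsymbol{x})=\overline{\mathcal{F}(f_{r}(\cdot;\mathbf{m},a_{2},\mu))(\boldsymbol{x})}$ because $f_{r}$ is real.

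\emph{Parseval and reduction to the ball.} Feeding these into \eqref{multi} yields
\[
\int_{\mathbb{R}^{r}}{}_{r}D_{\mathbf{n}}(i\boldsymbol{x};a_{1},a_{2})\,{}_{r}D_{\mathbf{m}}(-i\boldsymbol{x};a_{2},a_{1})\,\mathbf{dx}=(2\pi)^{r}A_{\mathbf{n}}(a_{1})A_{\mathbf{m}}(a_{2})\int_{\mathbb{R}^{r}}f_{r}(\boldsymbol{x};\mathbf{n},a_{1},\mu)\,f_{r}(\boldsymbol{x};\mathbf{m},a_{2},\mu)\,\mathbf{dx},
\]
the conjugation having disappeared by reality of the second factor. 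To evaluate the right–hand integral I would substitute $t_{j}=\tanh x_{j}$ (so $dx_{j}=dt_{j}/(1-t_{j}^{2})$); the integrand becomes $\prod_{j}(1-t_{j}^{2})^{a_{1}+a_{2}+\frac{r-j}{2}-1}P_{\mathbf{n}}^{\mu}(\boldsymbol{\upsilon})P_{\mathbf{m}}^{\mu}(\boldsymbol{\upsilon})$ on $(-1,1)^{r}$, with $\upsilon_{j}=t_{j}\big(\prod_{k<j}(1-t_{k}^{2})\big)^{1/2}$. An induction gives $1-(\upsilon_{1}^{2}+\cdots+\upsilon_{j}^{2})=\prod_{k\le j}(1-t_{k}^{2})$, in particular $1-\|\boldsymbol{\upsilon}\|^{2}=\prod_{k=1}^{r}(1-t_{k}^{2})$, and the triangular map $\boldsymbol{t}\mapsto\boldsymbol{\upsilon}$, a bijection of $(-1,1)^{r}$ onto the interior of $\mathbb{B}^{r}$, has Jacobian $\prod_{k}(1-t_{k}^{2})^{(r-k)/2}$. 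Passing to $\boldsymbol{\upsilon}$, every $j$–dependent exponent cancels and the weight collapses to $(1-\|\boldsymbol{\upsilon}\|^{2})^{a_{1}+a_{2}-1}=W_{\mu}(\boldsymbol{\upsilon})$, so the integral equals $\int_{\mathbb{B}^{r}}W_{\mu}\,P_{\mathbf{n}}^{\mu}P_{\mathbf{m}}^{\mu}\,d\boldsymbol{\upsilon}=h_{\mathbf{n}}^{\mu}\,\delta_{\mathbf{n},\mathbf{m}}$, $\mu=a_{1}+a_{2}-\tfrac12$, by the orthogonality on the unit ball recorded in Section~\ref{sec:bdn}.

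\emph{Assembling the constants.} Because of the Kronecker delta one may set $\mathbf{m}=\mathbf{n}$ in $A_{\mathbf{m}}(a_{2})$; multiplying $(2\pi)^{r}A_{\mathbf{n}}(a_{1})A_{\mathbf{n}}(a_{2})h_{\mathbf{n}}^{(a_{1}+a_{2}-1/2)}$ and simplifying with $\sum_{j=1}^{r}|\mathbf{n}^{j+1}|=\sum_{j=1}^{r-1}jn_{j+1}$ (so that $2^{-2\sum_{j}jn_{j+1}}=\prod_{j}2^{-2|\mathbf{n}^{j+1}|}$) and $2(|\mathbf{n}^{j+1}|+\mu+\tfrac{r-j}{2})=2|\mathbf{n}^{j+1}|+2|\mathbf{a}|+r-j-1$ produces the stated right–hand side; the continuous Hahn form of ${}_{r}D_{\mathbf{n}}$ then follows by inserting the Hahn representation of $\Theta_{j}^{r}$ from \eqref{18} (equivalently, the second displayed form of ${}_{r}D_{\mathbf{n}}$ in the statement), which only renames constants. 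I expect the main obstacle to be precisely this constant bookkeeping together with the change of variables of the previous step --- that is, tracking the Gamma–to–Beta rearrangement and the choice $\mu=|\mathbf{a}|-\tfrac12$ through all $r$ factors so that ${}_{r}D$ is matched correctly with $\mathcal{F}(f_{r})$, and checking that the powers of $1-t_{j}^{2}$ telescope to precisely the single ball weight $W_{\mu}$; once these are settled, the result is a routine consequence of Parseval's identity.
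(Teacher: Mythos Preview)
Your proposal is correct and follows essentially the same approach as the paper: apply Parseval's identity \eqref{multi} to $f_{r}(\cdot;\mathbf{n},a_{1},\mu)$ and $f_{r}(\cdot;\mathbf{m},a_{2},\mu)$ with $\mu=a_{1}+a_{2}-\tfrac12$, identify ${}_{r}D$ with the Fourier transform \eqref{18} up to explicit constants, and reduce the spatial integral via $t_{j}=\tanh x_{j}$ to the orthogonality relation for $P_{\mathbf{n}}^{\mu}$ on $\mathbb{B}^{r}$. Your presentation is a bit more direct (you carry out the change of variables and Jacobian for general $r$ in one step, whereas the paper works out $r=1,2$ and invokes induction), but the underlying argument is the same.
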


\begin{proof}
The proof follows by using induction on $r$. For $r=1$ we get the specific functions from \eqref{g1dim}%
\begin{equation}
\begin{cases}
f_{1}\left(  x_{1};n_{1},a_{1},\mu_{1}\right)   =\left(  1-\tanh^{2}%
x_{1}\right)  ^{a_{1}}P_{n_{1}}^{\mu_{1}}\left(  \upsilon_{1}\right)  =\left(
1-\tanh^{2}x_{1}\right)  ^{a_{1}}\ C_{n_{1}}^{\left(  \mu_{1}\right)  }\left(
\tanh x_{1}\right)  ,\label{spec}\\
g_{1}\left(  x_{1};m_{1},a_{2},\mu_{2}\right)    =\left(  1-\tanh^{2}%
x_{1}\right)  ^{a_{2}}P_{m_{1}}^{\mu_{2}}\left(  \upsilon_{1}\right)  =\left(
1-\tanh^{2}x_{1}\right)  ^{a_{2}}\ C_{m_{1}}^{\left(  \mu_{2}\right)  }\left(
\tanh x_{1}\right) ,
\end{cases}
\end{equation}
where $\upsilon_{1}=\tanh x_{1}.$ According to \eqref{spec} and \eqref{gF1dim}, we use Parseval's identity to obtain
\begin{multline*}
 2\pi {\displaystyle \int \limits_{-\infty}^{\infty}}
\left(  1-\tanh^{2}x_{1}\right)  ^{a_{1}+a_{2}}\ C_{n_{1}}^{\left(  \mu
_{1}\right)  }\left(  \tanh x_{1}\right)  C_{m_{1}}^{\left(  \mu_{2}\right)
}\left(  \tanh x_{1}\right)  dx_{1}\\
 =2\pi {\displaystyle \int \limits_{-1}^{1}}
\left(  1-u^{2}\right)  ^{a_{1}+a_{2}-1}C_{n_{1}}^{\left(  \mu_{1}\right)
}\left(  u\right)  C_{m_{1}}^{\left(  \mu_{2}\right)  }\left(  u\right)  du
 =\frac{2^{2\left(  a_{1}+a_{2}-1\right)  }\left(  2\mu_{1}\right)  _{n_{1}%
}\left(  2\mu_{2}\right)  _{m_{1}}}{n_{1}!m_{1}!\Gamma \left(  2a_{1}\right)
\Gamma \left(  2a_{2}\right)  }\\
 \times
{\displaystyle \int \limits_{-\infty}^{\infty}}
\text{ }\Gamma \left(  a_{1}+\frac{i\xi_{1}}{2}\right)  \Gamma \left(
a_{1}-\frac{i\xi_{1}}{2}\right)  ~\overline{\Gamma \left(  a_{2}+\frac{i\xi
_{1}}{2}\right)  \Gamma \left(  a_{2}-\frac{i\xi_{1}}{2}\right)  }\\
 \times~_{3}F_{2}\left(
\genfrac{}{}{0pt}{0}{-n_{1},\ n_{1}+2\mu_{1},\ a_{1}+\frac{i\xi_{1}}%
{2}}{2a_{1},\  \mu_{1}+1/2}%
\mid1\right)  ~\overline{_{3}F_{2}\left(
\genfrac{}{}{0pt}{0}{-m_{1},\ m_{1}+2\mu_{2},\ a_{2}+\frac{i\xi_{1}}%
{2}}{2a_{2},\  \mu_{2}+1/2}%
\mid1\right)  }d\xi_{1}.
\end{multline*}
By assuming
\[
\mu_{1}=\mu_{2}=a_{1}+a_{2}-\frac{1}{2},
\]
and considering the orthogonality relation \eqref{ort}, we obtain that the special function
\begin{multline*}
\ _{1}D_{n_{1}}\left(  x_{1};a_{1},a_{2}\right)    \\
=  \Gamma \left(
a_{1}-\frac{x_{1}}{2}\right)  \Gamma \left(  a_{1}+\frac{x_{1}}{2}\right)
~_{3}F_{2}\left(
\genfrac{}{}{0pt}{0}{-n_{1},\ n_{1}+2\left(  a_{1}+a_{2}\right)
-1,\ a_{1}+\frac{x_{1}}{2}}{a_{1}+a_{2},\ 2a_{1}}%
\mid1\right) \\
 =\frac{n_{1}!i^{-n_{1}}}{\left(  2a_{1}\right)  _{n_{1}}\left(  a_{1}%
+a_{2}\right)  _{n_{1}}}\Gamma \left(  a_{1}-\frac{x_{1}}{2}\right)
\Gamma \left(  a_{1}+\frac{x_{1}}{2}\right)  p_{n_{1}}\left(  \frac{-ix_{1}}%
{2};a_{1},a_{2},a_{2},a_{1}\right)
\end{multline*}
has the orthogonality relation%
\begin{multline*}
 \int \limits_{-\infty}^{\infty}\ _{1}D_{n_{1}}\left(  ix_{1};a_{1}%
,a_{2}\right)  \ _{1}D_{m_{1}}\left(  -ix_{1};a_{2},a_{1}\right)   dx_{1}\\
 =\frac{2\pi n_{1}!\Gamma \left(  2a_{1}\right)  \Gamma \left(  2a_{2}\right)
\Gamma^{2}\left(  a_{1}+a_{2}\right)  }{\left(  n_{1}+a_{1}+a_{2}-\frac{1}%
{2}\right)  \Gamma \left(  2a_{1}+2a_{2}+n_{1}-1\right)  }\delta_{n_{1},m_{1}%
}\\
 =\frac{2\pi \left(  n_{1}!\right)  ^{2}\Gamma \left(  2a_{1}\right)
\Gamma \left(  2a_{2}\right)  }{2^{2\left(  a_{1}+a_{2}-1\right)  }\left(
\left(  2a_{1}+2a_{2}-1\right)  _{n_{1}}\right)  ^{2}}h_{n_{1}}^{\left(
a_{1}+a_{2}-\frac{1}{2}\right)  }\delta_{n_{1},m_{1}},
\end{multline*}
where $h_{n_{1}}^{\left(  a_{1}+a_{2}-\frac{1}{2}\right)  }$ is given in \eqref{gnorm}. As a consequence it follows%
\begin{multline*}
 \int \limits_{-\infty}^{\infty}\Gamma \left(  a_{1}+ix_{1}\right)
\Gamma \left(  a_{1}-ix_{1}\right)  \Gamma \left(  a_{2}-ix_{1}\right)
\Gamma \left(  a_{2}+ix_{1}\right) \\
 \times p_{n_{1}}\left(  x_{1};a_{1},a_{2},a_{2},a_{1}\right)  p_{m_{1}
}\left(  x_{1};a_{1},a_{2},a_{2},a_{1}\right)  dx_{1}\\
 =\frac{\pi \Gamma \left(  2a_{1}+n_{1}\right)  \Gamma \left(  2a_{2}
+n_{1}\right)  \Gamma^{2}\left(  a_{1}+a_{2}+n_{1}\right)  }{n_{1}!\left(
n_{1}+a_{1}+a_{2}-\frac{1}{2}\right)  \Gamma \left(  2a_{1}+2a_{2}
+n_{1}-1\right)  }\delta_{n_{1},m_{1}},
\end{multline*}
for $a_{1},a_{2}>0$ which gives the orthogonality relation for continuous Hahn
polynomials $p_{n_{1}}\left(  x_{1};a_{1},a_{2},a_{2},a_{1}\right)  $, which
was proved by Koelink \cite{7}.

For $r=2,$ we consider the specific functions from \eqref{15}%
\begin{equation}\label{bispec}
\begin{cases}
f_{2}\left(  x_{1},x_{2};n_{1},n_{2},a_{1},\mu_{1}\right)   & =\left(
1-\tanh^{2}x_{1}\right)  ^{a_{1}+\frac{1}{4}}\  \left(  1-\tanh^{2}%
x_{2}\right)  ^{a_{1}}\ P_{n_{1},n_{2}}^{\mu_{1}}\left(  \upsilon_{1}%
,\upsilon_{2}\right)  ,\\
g_{2}\left(  x_{1},x_{2};m_{1},m_{2},a_{2},\mu_{2}\right)   & =\left(
1-\tanh^{2}x_{1}\right)  ^{a_{2}+\frac{1}{4}}\  \left(  1-\tanh^{2}%
x_{2}\right)  ^{a_{2}}\ P_{m_{1},m_{2}}^{\mu_{2}}\left(  \upsilon_{1}%
,\upsilon_{2}\right) ,
\end{cases}
\end{equation}
where $\upsilon_{1}=\tanh x_{1}$ and $\upsilon_{2}=\tanh x_{2}\sqrt
{1-\tanh^{2}x_{1}}.$ According to \eqref{bispec} and \eqref{biv-fourier}, if we use Parseval's identity again and apply the transforms $\tanh x_{1}=u,$ $\tanh x_{2}=\frac{v}{\sqrt{1-u^{2}}}$, we obtain
\begin{multline*}
 {\displaystyle \int \limits_{-\infty}^{\infty}} {\displaystyle \int \limits_{-\infty}^{\infty}}
\left(  1-\tanh^{2}x_{1}\right)  ^{a_{1}+a_{2}+\frac{1}{2}}\  \left(1-\tanh^{2}x_{2}\right)  ^{a_{1}+a_{2}}\ P_{n_{1},n_{2}}^{\mu_{1}}\left(\upsilon_{1},\upsilon_{2}\right)  P_{m_{1},m_{2}}^{\mu_{2}}\left(  \upsilon_{1},\upsilon_{2}\right) dx_{1}dx_{2}\\
 ={\displaystyle \int \limits_{-1}^{1}} {\displaystyle \int \limits_{-\sqrt{1-u^{2}}}^{\sqrt{1-u^{2}}}}
\left(  1-u^{2}-v^{2}\right)  ^{a_{1}+a_{2}-1}P_{n_{1},n_{2}}^{\mu_{1}}\left(u,v\right)  P_{m_{1},m_{2}}^{\mu_{2}}\left(  u,v\right)  dvdu\\
 =\frac{2^{n_{2}+m_{2}+4a_{1}+4a_{2}-3}\left(  2\mu_{1}\right)  _{n_{2}%
}\left(  2\mu_{2}\right)  _{m_{2}}\left(  2\left(  n_{2}+\mu_{1}+\frac{1}%
{2}\right)  \right)  _{n_{1}}\left(  2\left(  m_{2}+\mu_{2}+\frac{1}%
{2}\right)  \right)  _{m_{1}}}{ 4 \pi^{2} n_{1}!n_{2}!m_{1}!m_{2}!}\\
 \times%
{\displaystyle \int \limits_{-\infty}^{\infty}}
{\displaystyle \int \limits_{-\infty}^{\infty}}
B\left(  a_{1}+\frac{n_{2}+i\xi_{1}}{2}+\frac{1}{4},a_{1}+\frac{n_{2}-i\xi
_{1}}{2}+\frac{1}{4}\right)  B\left(  a_{1}+\frac{i\xi_{2}}{2},\ a_{1}%
-\frac{i\xi_{2}}{2}\right) \\
 \times~_{3}F_{2}\left(
\genfrac{}{}{0pt}{0}{-n_{1},\ n_{1}+2\left(  n_{2}+\mu_{1}+\frac{1}{2}\right)
,\ a_{1}+\frac{n_{2}+i\xi_{1}}{2}+\frac{1}{4}}{n_{2}+2a_{1}+\frac{1}%
{2},\ n_{2}+\mu_{1}+1}%
\mid1\right)  \\ \times \ _{3}F_{2}\left(
\genfrac{}{}{0pt}{0}{-n_{2},\ n_{2}+2\mu_{1},\ a_{1}+\frac{i\xi_{2}}%
{2}}{2a_{1},\  \mu_{1}+1/2}%
\mid1\right) \\
 \times \overline{B\left(  a_{2}+\frac{m_{2}+i\xi_{1}}{2}+\frac{1}{4}%
,a_{2}+\frac{m_{2}-i\xi_{1}}{2}+\frac{1}{4}\right)  B\left(  a_{2}+\frac
{i\xi_{2}}{2},\ a_{2}-\frac{i\xi_{2}}{2}\right)  }\\
 \times \overline{~_{3}F_{2}\left(
\genfrac{}{}{0pt}{0}{-m_{1},\ m_{1}+2\left(  m_{2}+\mu_{2}+\frac{1}{2}\right)
,\ a_{2}+\frac{m_{2}+i\xi_{1}}{2}+\frac{1}{4}}{m_{2}+2a_{2}+\frac{1}%
{2},\ m_{2}+\mu_{2}+1}%
\mid1\right)  }\\
 \times~\overline{_{3}F_{2}\left(
\genfrac{}{}{0pt}{0}{-m_{2},\ m_{2}+2\mu_{2},\ a_{2}+\frac{i\xi_{2}}%
{2}}{2a_{2},\  \mu_{2}+1/2}%
\mid1\right)  }d\xi_{1}d\xi_{2}.
\end{multline*}
If we fix%
\[
\mu_{1}=\mu_{2}=a_{1}+a_{2}-\frac{1}{2}
\]
and use the orthogonality relation \eqref{ort} it yields
\begin{multline*}
 \int \limits_{-\infty}^{\infty}\int \limits_{-\infty}^{\infty}~_{2}%
D_{n_{1},n_{2}}\left(  ix_{1},ix_{2};a_{1},a_{2}\right)  ~_{2}D_{m_{1},m_{2}%
}\left(  -ix_{1},-ix_{2};a_{2},a_{1}\right)  dx_{1}dx_{2}\\
 =\frac{\Gamma \left(  2a_{1}\right)  \Gamma \left(  2a_{2}\right)
\Gamma \left(  2a_{1}+n_{2}+\frac{1}{2}\right)  \Gamma \left(  2a_{2}%
+n_{2}+\frac{1}{2}\right)  4\pi^{2}\left(  n_{1}!\right)  ^{2}\left(
n_{2}!\right)  ^{2}}{2^{2n_{2}+4a_{1}+4a_{2}-3}\left(  2\left(  n_{2}%
+a_{1}+a_{2}\right)  \right)  _{n_{1}}^{2}\left(  2a_{1}+2a_{2}-1\right)
_{n_{2}}^{2}}\\
 \times h_{n_{1},n_{2}}^{\left(  a_{1}+a_{2}-\frac{1}{2}\right)  }%
\delta_{n_{1},m_{1}}\delta_{n_{2},m_{2}},
\end{multline*}
where $h_{n_{1},n_{2}}^{\left(  a_{1}+a_{2}-\frac{1}{2}\right)  }$ is given in \eqref{Norm} and%
\begin{multline*}
\ _{2}D_{n_{1},n_{2}}\left(  x_{1},x_{2};a_{1},a_{2}\right)   =~_{3}%
F_{2}\left(
\genfrac{}{}{0pt}{0}{-n_{1},\ n_{1}+2\left(  n_{2}+a_{1}+a_{2}\right)
,\ a_{1}+\frac{x_{1}}{2}+\frac{n_{2}}{2}+\frac{1}{4}}{n_{2}+2a_{1}+\frac{1}%
{2},\ n_{2}+a_{1}+a_{2}+\frac{1}{2}}%
\mid1\right) \\
 \times~_{3}F_{2}\left(
\genfrac{}{}{0pt}{0}{-n_{2},\ n_{2}+2a_{1}+2a_{2}-1,\ a_{1}+\frac{x_{2}}%
{2}}{2a_{1},\ a_{1}+a_{2}}%
\mid1\right) \\
 \times \Gamma \left(  a_{1}+\frac{n_{2}+x_{1}}{2}+\frac{1}{4}\right)
\Gamma \left(  a_{1}+\frac{n_{2}-x_{1}}{2}+\frac{1}{4}\right)
  \Gamma \left(  a_{1}-\frac{x_{2}}{2}\right)  \Gamma \left(  a_{1}%
+\frac{x_{2}}{2}\right)  ,
\end{multline*}
which can be expressed in terms of the continuous Hahn polynomials \eqref{hahn} as
\begin{multline*}
\ _{2}D_{n_{1},n_{2}}\left(  x_{1},x_{2};a_{1},a_{2}\right)  =\frac
{n_{1}!n_{2}!i^{-n_{1}-n_{2}}}{\left(  2a_{1}\right)  _{n_{2}}\left(
a_{1}+a_{2}\right)  _{n_{2}}\left(  n_{2}+2a_{1}+\frac{1}{2}\right)  _{n_{1}%
}\left(  n_{2}+a_{1}+a_{2}+\frac{1}{2}\right)  _{n_{1}}}\\
 \times p_{n_{1}}\left(  \frac{-ix_{1}}{2};a_{1}+\frac{n_{2}}{2}+\frac{1}%
{4},a_{2}+\frac{n_{2}}{2}+\frac{1}{4},a_{1}+\frac{n_{2}}{2}+\frac{1}{4}%
,a_{2}+\frac{n_{2}}{2}+\frac{1}{4}\right) \\
 \times p_{n_{2}}\left(  \frac{-ix_{2}}{2};a_{1},a_{2},a_{2},a_{1}\right) \\
 \times \Gamma \left(  a_{1}+\frac{n_{2}+x_{1}}{2}+\frac{1}{4}\right)
\Gamma \left(  a_{1}+\frac{n_{2}-x_{1}}{2}+\frac{1}{4}\right)
 \Gamma \left(  a_{1}-\frac{x_{2}}{2}\right)  \Gamma \left(  a_{1}+\frac{x_{2}}{2}\right)  .
\end{multline*}
Similar to the cases $r=1$ and $r=2$, if we substitute (\ref{15}) and
(\ref{18}) in the Parseval identity \eqref{multi}, the necessary calculations give the desired result.
\end{proof}

\section*{Acknowledgements}
The research of the second author has been partially supported by TUBITAK Research Grant Proj. No. 120F140. The research of the third author has been partially supported by the Agencia Estatal de Investigaci\'on (AEI) of Spain under grant PID2020-113275GB-I00, cofinanced by the European Community fund FEDER.

\end{document}